\theoremstyle{plain}
\newtheorem{thm}{Theorem}[section]
\newtheorem{theorem}[thm]{Theorem}
\newtheorem{proposition}[thm]{Proposition}
\theoremstyle{definition}
\newtheorem{definition}[thm]{Definition}
\newtheorem{remark}[thm]{Remark}
\newtheorem{example}[thm]{Example}
\newtheorem{thevarthm}[thm]{\varthmname}
\newenvironment{varthm*}[1]{\trivlist\item[]{\bf #1.}\it}{\endtrivlist}
\def\keywordname{{\bfseries Keywords}}%
\def\keywords#1{\par\addvspace\medskipamount{\rightskip=0pt plus1cm
\def\and{\ifhmode\unskip\nobreak\fi\ $\cdot$
}\noindent\keywordname\enspace\ignorespaces#1\par}}
\def\subclassname{{\bfseries Mathematics Subject Classification
(2020)}\enspace}
\def\subclass#1{\par\addvspace\medskipamount{\rightskip=0pt plus1cm
\def\and{\ifhmode\unskip\nobreak\fi\ $\cdot$
}\noindent\subclassname\ignorespaces#1\par}}
\begin{document}
\title{On homological properties of some Cynk-Szemberg octic hyperplane arrangements}
\author{Marek Janasz and Piotr Pokora}
\date{\today}
\maketitle

\thispagestyle{empty}
\begin{abstract}
In this paper we study Cynk-Szemberg octic hyperplane arrangements from the perspective of homological properties of their derivation modules. In particular, we define the notion of the type of hyperplane arrangements that will be used in our characterization of rigid Cynk-Szemberg octic hyperplane arrangements. Moreover, we deliver a combinatorial non-freeness criterion for essential hyperplane arrangements in $\mathbb{C}^{4}$.
\keywords{freeness, hyperplane arrangements, derivation modules}
\subclass{14N20, 52C35, 32S22}
\end{abstract}
\section{Introduction}

Our paper is motivated by a longstanding open conjecture in the theory of hyperplane arrangements in complex projective spaces: Terao's freeness conjecture -- see, for example, \cite{Dimca, Dimca1, terao}. If $\mathcal{A}$ is an arrangement of hyperplanes in a complex projective space $\mathbb{P}^{n}$, then Terao conjecture predicts that the freeness of $\mathcal{A}$ is determined by the intersection poset $L(\mathcal{A})$ of $\mathcal{A}$, i.e., the set of all subspaces obtained by intersecting some of the hyperplanes of $\mathcal{A}$, partially ordered by reverse inclusion. Recall that if $\mathcal{A} \subset \mathbb{P}^{n}$ is a hyperplane arrangement and $Q \in S:=\mathbb{C}[x_{0}, \ldots , x_{n}]$ is a defining equation of $\mathcal{A}$, then the derivation module $D(\mathcal{A})$ of $\mathcal{A}$ is defined as
$$D(\mathcal{A}) = \{\theta \in {\rm Der}(S) \, : \, \theta(\alpha_{H}) \in S \cdot \alpha_{H} \,\text{ for all } \, H \in \mathcal{A}\},$$
where $\alpha_{H} \in S$ denotes a linear form such that ${\rm ker}(\alpha_{H}) = H$ for each $H \in \mathcal{A}$. An arrangement $\mathcal{A}$ is \textbf{free} if $D(\mathcal{A})$ is a free $S$-module. It is worth mentioning here that the class of free arrangements is still mysterious and we try to understand its basic properties, both combinatorial and geometric.
In the present note we want to present a systematic study of a certain class of hyperplane arrangements in $\mathbb{P}^{3}$, namely Cynk--Szemberg octic hyperplane arrangements that were introduced in \cite{CSz}. These hyperplane arrangements are interesting objects in algebraic geometry due to their applications, namely they can be used to construct Calabi-Yau threefolds as double covers of $\mathbb{P}^{3}$ branched along these octic hyperplane arrangements. From the combinatorial perspective of the intersection lattices, Cynk--Szemberg octics admit only double and triple intersection lines and arrangement $q$-fold points with $q\in \{2,3,4,5\}$, so these conditions are rather restrictive. Nevertheless, according to our understanding, these octic hyperplane arrangements have not yet been completely classified. Here we focus on homological properties of these arrangements, namely we want to detect these examples of Cynk--Szemberg octics that are close to be free, and this property will be measured by the so-called type of hyperplane arrangements, which is a direct generalization of the notion defined by the second author with Abe and Dimca in \cite{her} for reduced plane curves. Our main result provides a complete homological classification of rigid Cynk--Szemberg octics in the language of the type, see Theorem \ref{rigOC}. We present also a purely combinatorial non-freeness criterion, see Proposition \ref{gen}, that we will use in the setting of our Cynk--Szemberg octics. 

In the paper we work exclusively over the complex numbers.

\section{Preliminaries}
We start with a general preliminaries on hyperplane arrangements. Let $V = \mathbb{C}^{n}$ and consider $S = {\rm Sym}^{*}(V^{*}) \cong \mathbb{C}[x_{1}, \ldots , x_{n}]$. For $n=4$, we assume that our coordinates are $x, y, z, w$. 

Let $\mathcal{A}$ be a central arrangement of hyperplanes in $V$, i.e., a finite set of distinct linear hyperplanes in $V$. Since we consider only central hyperplane arrangements in $\mathbb{C}^{n}$, these objects can be treated as arrangements of hyperplanes in $\mathbb{P}^{n-1}$ via a natural identification, see \cite[Definition 2.4]{Dimca}. For each $H \in \mathcal{A}$ we fix a linear form $\alpha_{H}$ such that ${\rm ker}(\alpha_{H}) = H$. Denote by $Q(\mathcal{A}) = \prod_{H \in \mathcal{A}}\alpha_{H}$ the defining equation of $\mathcal{A}$, and let ${\rm Der}(S) =\bigoplus_{i=1}^{n} S\cdot \partial_{x_{i}}$.
The derivation module $D(\mathcal{A})$ of $\mathcal{A}$ is defined as
$$D(\mathcal{A}) = \{\theta \in {\rm Der}(S) \, : \, \theta(\alpha_{H}) \in S \cdot \alpha_{H} \,\text{ for all } \, H \in \mathcal{A}\}.$$
\begin{definition}
A central hyperplane arrangement $\mathcal{A} \subset \mathbb{C}^{n}$ is free with the exponents ${\rm exp}(A) = (d_{1}, \ldots ,d_{n})$ if there exists a homogeneous $S$-basis $\theta_{1}, \ldots , \theta_{n}$ for $D(\mathcal{A})$ such that ${\rm deg} (\theta_{i}) = d_{i}$ for all $i=1,\ldots,n$.
\end{definition}

\begin{definition}
	Let $\mathcal{A}$ be an arrangement in $\mathbb{C}^n$ which is not necessary free. We say that 
	$(d_1,\ldots,d_s)$ is a \textbf{degree sequence} of $\mathcal{A}$, denoted by $\exp(\mathcal{A})$,  if 
	the $0$-th syzygy of a minimal free resolution of $D(\mathcal{A})$ is of the form 
	$\oplus_{i=1}^s S[-d_i]$.
	\label{degreesequence}
\end{definition}
From now on we assume that our arrangements are always central and non-empty. Then every generating set of $D(\mathcal{A})$ contains at least one element of degree $1$, and we can choose this element as the well-known Euler derivation
$$\theta_{E} = \sum_{i=1}^{\ell}x_{i} \partial_{x_{i}}.$$
Let us define
$$D_{0}(\mathcal{A}) = \{\theta \in {\rm Der}(S) \, : \, \theta(\alpha_{H}) = 0 \, \text{ for all } \, H \in \mathcal{A}\}.$$
Then the following decomposition holds \cite[pp. 151 -- 152]{Dimca}:
$$D(\mathcal{A}) = S\cdot \theta_{E} \oplus D_{0}(\mathcal{A}).$$
\begin{remark}
We will use a well-known convention that by ${\rm exp}_{0}(\mathcal{A})$ we denote the degree sequence for $D_{0}(\mathcal{A})$. Moreover, for a free arrangement $\mathcal{A}$ we write ${\exp}(\mathcal{A}) = (1,{\rm exp}_{0}(\mathcal{A}))$.
\end{remark}
\begin{remark}
In light of the above decomposition, determining the freeness of $\mathcal{A}$ boils down to checking whether $D_{0}(\mathcal{A})$ is a free $S$-module.
\end{remark}
\noindent
Now we introduce the Poincar\'e polynomial for $\mathcal{A}$. Denote by $L(\mathcal{A})$ the intersection lattice of $\mathcal{A}$, i.e.,
$$L(\mathcal{A}) = \{ \cap_{H \in \mathcal{B}}H \, : \, \mathcal{B} \subseteq \mathcal{A}\},$$
which is ordered by reverse inclusion.  We define a rank function for elements in $L(\mathcal{A})$, namely for $X \in L(\mathcal{A})$ one has
$$r(X) = n - {\rm dim} \, X.$$
Recall now that the dimension of an arrangement ${\rm dim}(\mathcal{A})$ of $\mathcal{A}$ is defined to be as ${\rm dim}(\mathbb{C}^{n})=n$, and the rank of $\mathcal{A}$ ${\rm rank}(\mathcal{A})$ is defined as the dimension of the space spanned by the normals of the hyperplanes in $\mathcal{A}$. We say that an arrangement $\mathcal{A}$ is \textbf{essential} if ${\rm rank}(\mathcal{A}) = {\rm dim}(\mathcal{A})$.

Let $\mu : L(\mathcal{A}) \times L(\mathcal{A}) \rightarrow \mathbb{Z}$ be the M\"obius function of $L$ and for $X\in L(\mathcal{A})$  we define $\mu(X) : = \mu(V,X)$. A Poincar\'e polynomial of $\mathcal{A}$ is defined by
$$\pi(\mathcal{A};t) = \sum_{X \in L(\mathcal{A})}\mu(X)(-t)^{r(X)}.$$
It is well-known that the Poincar\'e polynomial is a degree $r(\mathcal{A})= \max_{X\in L(\mathcal{A})} r(X)$ polynomial in $t$ with non-negative coefficients, and for essential arrangements we have $r(\mathcal{A}) = {\rm rank}(\mathcal{A}) = n$. We will use the following fundamental result due to Terao \cite{terao}.
\begin{theorem}[Terao's factorization]
Assume that $\mathcal{A}$ is free with exponents ${\rm exp}(\mathcal{A}) = (d_{1}, \ldots, d_{n})$. Then
$$\pi(\mathcal{A};t) =\prod_{i}(1+d_{i}t),$$
which also means that for free arrangements their exponents are determined by the intersection lattice.
\end{theorem}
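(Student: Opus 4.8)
\medskip
\noindent\emph{Proof strategy.}
The plan is to recast the statement in terms of the characteristic polynomial $\chi(\mathcal{A};t) = \sum_{X \in L(\mathcal{A})}\mu(X)\,t^{\dim X}$, which by a direct substitution satisfies $\pi(\mathcal{A};t) = (-t)^{n}\chi(\mathcal{A};-1/t)$; hence the asserted identity is equivalent to $\chi(\mathcal{A};t) = \prod_{i=1}^{n}(t - d_{i})$, and the concluding clause is then immediate, since $\pi(\mathcal{A};t)$ is manifestly a function of $L(\mathcal{A})$ and the product $\prod_{i}(1+d_{i}t)$ determines the multiset $\{d_{1},\ldots,d_{n}\}$ (its roots are the $-1/d_{i}$, and all $d_{i}\ge 1$ for an essential arrangement). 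I would stress at the outset that the tempting induction on $|\mathcal{A}|$ via the deletion--restriction recursion $\chi(\mathcal{A};t) = \chi(\mathcal{A}';t) - \chi(\mathcal{A}'';t)$ does \emph{not} suffice: not every free arrangement admits a hyperplane for which Terao's addition--deletion theorem applies (freeness is strictly weaker than inductive freeness), so the argument must instead go through a structural bridge linking the homology of $D(\mathcal{A})$ to the combinatorics of $L(\mathcal{A})$.

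That bridge is the Solomon--Terao formula: for an arbitrary central arrangement in $\mathbb{C}^{n}$ one recovers $\chi(\mathcal{A};t)$ as a specialization, in the limit $x \to 1$, of a two-variable generating function $\Psi(\mathcal{A};x,t)$ assembled from the graded Hilbert series $\mathrm{H}(D^{p}(\mathcal{A}),x)$ of the modules of logarithmic $p$-derivations (equivalently, of the logarithmic $p$-forms $\Omega^{p}(\mathcal{A})$); its proof rests on the fact that the logarithmic de Rham complex, suitably twisted, computes these Hilbert series and its Euler characteristic telescopes onto $\chi$. The second ingredient I would invoke is that for a \emph{free} arrangement all the $D^{p}(\mathcal{A})$ are again free: one has $D^{p}(\mathcal{A}) \cong \textstyle\bigwedge^{p}_{S}D(\mathcal{A})$, an exterior power of a free module is free, and if $\exp(\mathcal{A}) = (d_{1},\ldots,d_{n})$ with homogeneous basis $\theta_{1},\ldots,\theta_{n}$ of $D(\mathcal{A})$ then the $p$-fold wedges $\theta_{i_{1}}\wedge\cdots\wedge\theta_{i_{p}}$ form a basis of $D^{p}(\mathcal{A})$ (checked by a Saito-type determinant argument). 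Consequently
$$\mathrm{H}(D^{p}(\mathcal{A}),x) = \frac{e_{p}(x^{d_{1}},\ldots,x^{d_{n}})}{(1-x)^{n}},$$
where $e_{p}$ is the $p$-th elementary symmetric polynomial.

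Substituting these Hilbert series into $\Psi(\mathcal{A};x,t)$ and applying the identity $\sum_{p=0}^{n}e_{p}(u_{1},\ldots,u_{n})\,s^{p} = \prod_{i=1}^{n}(1+u_{i}s)$ collapses the sum over $p$ into a product, so that $\Psi(\mathcal{A};x,t)$ takes the shape $(1-x)^{-n}\prod_{i=1}^{n}\bigl(1 + x^{d_{i}}\,w(x,t)\bigr)$ for the relevant weight $w(x,t)$ dictated by the Solomon--Terao formula. The factor $(1-x)^{-n}$ is cancelled, as $x \to 1$, by the $n$ vanishing factors $1 + x^{d_{i}}w(x,t)$; a first-order Taylor expansion at $x=1$ shows that each such factor contributes precisely $(t - d_{i})$, up to the overall sign and normalization prescribed by the formula. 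Hence $\chi(\mathcal{A};t) = \prod_{i=1}^{n}(t-d_{i})$, and therefore $\pi(\mathcal{A};t) = \prod_{i=1}^{n}(1+d_{i}t)$.

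I expect the main obstacle to be the Solomon--Terao formula itself: its proof requires knowing that the logarithmic de Rham complex of a free (more generally, tame) arrangement is acyclic away from a low-dimensional locus, so that the alternating sum of Hilbert series behaves as claimed; by comparison, the freeness and exterior-power description of the $D^{p}(\mathcal{A})$ are comparatively routine given Saito's criterion. Since this is a classical theorem of Terao, in the body of the paper we will simply cite \cite{terao} and use the factorization as a black box in what follows.
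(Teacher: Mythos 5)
The paper gives no proof of this statement: it is quoted as a classical theorem and simply attributed to \cite{terao}, so there is no internal argument to compare yours against. Your sketch is a correct outline of the standard textbook proof (the Orlik--Terao route): rewrite $\pi(\mathcal{A};t)$ in terms of the characteristic polynomial, invoke the Solomon--Terao formula expressing $\chi(\mathcal{A};t)$ as an $x\to 1$ limit built from the Hilbert series of the modules $D^{p}(\mathcal{A})$, and use that for a free arrangement $D^{p}(\mathcal{A})\cong\bigwedge^{p}_{S}D(\mathcal{A})$ is free with the wedge basis, so the sum over $p$ collapses to $\prod_{i}\bigl(1+x^{d_{i}}(t(x-1)-1)\bigr)/(1-x)^{n}$ and each factor contributes $t-d_{i}$ in the limit; your warning that a naive deletion--restriction induction cannot work (free $\neq$ inductively free) is also well taken. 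Two corrections to your commentary. First, the obstacle you anticipate is not real: the Solomon--Terao formula holds for \emph{every} central arrangement, with a purely algebraic proof by induction over the intersection lattice using localization of the modules $D^{p}(\mathcal{A})$; no acyclicity or tameness of the logarithmic de Rham complex is needed, that hypothesis enters other results. Second, as a historical matter the argument in \cite{terao} (1981) predates the Solomon--Terao formula and proceeds differently, so if you cite \cite{terao} you are citing a different proof than the one you sketch --- harmless, but worth knowing. Finally, in the last clause you should either restrict to essential arrangements (as the paper effectively does) or add that zero exponents, which the product $\prod_{i}(1+d_{i}t)$ does not see, are counted by $n-\mathrm{rank}(\mathcal{A})$, which is again determined by $L(\mathcal{A})$.
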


Finally, we are going to define the type of an essential hyperplane arrangement $\mathcal{A}\subset \mathbb{C}^{n}$.
\begin{definition}[Type of a hyperplane arrangement]
Let $\mathcal{A} \subset \mathbb{C}^{n}$ be a essential arrangement of $k$ hyperplanes with ${\rm exp}_{0}(\mathcal{A}) = (d_{1}, \ldots, d_{s})$ and $s\geq n-1$. Without loss of generality, we can assume that $1 \leq d_{1} \leq d_{2} \leq d_{3} \leq \ldots \leq d_{s}$.
Then the type of $\mathcal{A}$ is defined as
$$t(\mathcal{A}) = \sum_{i=1}^{n-1} d_{i} - k + 1.$$
\end{definition}
We should mention that the notion of type has been recently introduced in \cite{her} in the setting of reduced plane curves in the complex projective plane. At first, this notion may seem a bit strange, but let us present some observations that justify its importance.
\begin{example}[Free hyperplane arrangements]
Let $\mathcal{A} \subset \mathbb{C}^{4}$ be an essential free arrangement of $k$ hyperplanes with ${\rm exp}_{0}(\mathcal{A}) = (d_{1}, d_{2}, d_{3}) $. According to Terao's factorization result, we know that the coefficient of the linear term of $\pi(\mathcal{A};t)$ is equal to the number of hyperplanes $k$, and simultaneous it is also equal to $1+ d_{1}+d_{2}+d_{3}$, hence 
$$d_{1}+d_{2}+d_{3}=k-1.$$
This gives us that for essential free hyperplane arrangements in $\mathbb{C}^{4}$ we have $t(\mathcal{A}) = 0$. On the other hand, if $t(\mathcal{A}) = 0$, then $d_{1}+d_{2}+d_{3}=k-1$, and \cite[Lemma 4.12]{DS} implies that $\mathcal{A}$ is free.

Based on the above discussion, we can conclude that an essential hyperplane arrangement $\mathcal{A} \subset \mathbb{C}^{4}$ is free \textbf{if and only if} $t(\mathcal{A})=0$.
\end{example}
\begin{example}[{Nearly free hyperplane arrangements, \cite[Definition 5.3]{DS}}]
Here we would like to discuss the case of essential nearly free hyperplane arrangements in $\mathbb{C}^{4}$.
Let us recall that $\mathcal{A} : f= 0$ is a central hyperplane arrangement in $\mathbb{C}^{4}$, then we define the following $S$-module of algebraic relations associated with the Jacobian ideal $J_{f} = \langle \partial_{x}f, \partial_{y}f, \partial_{z}f , \partial_{w}f \rangle$, namely
$${\rm AR}(f) = \{ (a_{1}, \ldots ,a_4) \in S^{\oplus 4} : a_{1}\partial_{x}f + a_{2}\partial_{y}f + a_{3}\partial_{z}f +a_{4}\partial_{w}f=0\}.$$
It turns our that ${\rm AR}(f)$ is isomorphic with $D_{0}(\mathcal{A})$, see \cite[pp. 151 -- 152]{Dimca}.
We say that $\mathcal{A}$ is nearly free if the module ${\rm AR}(f)$ is not free of rank $3$, but it has exactly $4$ generators $r_{1},r_{2},r_{3},r_{4}$ such that ${\rm deg}(r_{i}) = d_{i}$ with $d_{3}=d_{4}$, and the second order syzygies (which exist since the module is not free) are spanned by a unique relation of the form
\[R : a_{1}r_{1} + a_{2}r_{2} + a_{3}r_{3} + a_{4}r_{4} = 0,\]
where $a_{1}, \ldots, a_{4}$ are homogeneous polynomials of degrees $d_{3}-d_{1}+1$, $d_{3}-d_{2}+1$, $1$, $1$, respectively. It turns out that if $\mathcal{A}$ is an essential arrangement of $k$ hyperplanes that is nearly free, then by \cite[Theorem 5.4 ii)]{DS} we know that ${\rm AR}(f)$ is generated in degrees $d_{1} \leq d_{2} \leq d_{3}=d_{4}$ such that $d_{1} + d_{2} + d_{3} = k$, and hence
$t(\mathcal{A})=1$.
\end{example}
\section{Cynk--Szemberg octic hyperplane arrangements}
Here we would like to introduce the notion of Cynk--Szemberg octic arrangements. The definition that we present below comes from Meyer's monograph \cite[Section 4.1]{Meyer} and it is a general definition, i.e., it does not focus just on hyperplanes.
\begin{definition}
Let $D \subset \mathbb{P}^{3}$ be a surface. We call $D$ as an arrangement if it is a sum of irreducible surfaces $D_{1}, \ldots , D_{r}$ with only isolated singular points satisfying the following conditions:
\begin{enumerate}
    \item for any $i\neq j$ the surfaces $D_{i}$ and $D_{j}$ intersect transversally along a smooth irreducible curve $C_{i,j}$ or they are disjoint;
    \item the curves $C_{i,j}$ and $C_{k,l}$ either coincide, are disjoint or intersect transversally.
\end{enumerate}
A singular point of $D_{i}$ is called as an isolated singular point of the arrangement. A point $p \in D$ which belongs to $p$ of the surfaces $D_{1}, \ldots, D_{r}$ is called as an arrangement $p$-fold point. We say that an irreducible curve $C \subset D$ is a $q$-fold curve if exactly $q$ of the surfaces $D_{1}, \ldots, D_{r}$ pass through it.
\end{definition}
We will use the following numerical data for a given arrangement $D$:
\begin{enumerate}
\item[a)] $k_{i}$ = the degree of $D_{i}$,
\item[b)] $t_{p}(1)$ = the number of $p$-fold lines,
\item[c)] $t_{q}$ = the number of arrangement $q$-fold points.
\end{enumerate}
\begin{definition}
If $D \subset \mathbb{P}^{3}$ is an arrangement of degree $8$, i.e., $k_{1} + \cdots + k_{r} =8$, then we call it as an octic arrangement.
\end{definition}
\begin{definition}[{\cite[Section 1]{Cynks}}]
If $D \subset \mathbb{P}^{3}$ is an arrangement consisting of eight hyperplanes, i.e., $D=\{H_{1}, \ldots, H_{8}\} \subset \mathbb{P}^{3}$ such that is has only double and triple lines and arrangement $q$-fold points with $q \in \{2,3,4,5\}$, then $D$ is called as a Cynk--Szemberg octic hyperplane arrangement.
\end{definition}
\begin{remark}
As we have mentioned in Introduction, Cynk--Szemberg octic hyperplane arrangements $D$ are very special since the double covers branched along $D$ have non-singular models $\widetilde{X}$ which are Calabi-Yau threefold. We refer to \cite{Cynks, Meyer} for a comprehensive discussion devoted to this interesting subject.
\end{remark}
\begin{remark}
In the setting of Cynk octic hyperplane arrangement, the following naive combinatorial counts hold:
\begin{equation}
\sum_{q\geq 3}\binom{q}{3}t_{q}  = 56 \quad \text{ and }  \quad \sum_{p\geq 2}t_{p}(1) = 28.
\end{equation}
\end{remark}
It is reasonable to expect that Cynk--Szemberg octic hyperplane arrangements are fully classified. Based on our literature review, we can conclude that there are $14$ examples of rigid octics \cite{Cynks}, i.e., those with $0$-dimensional moduli, and $63$ examples of one-parameter families described explicitly by Meyer \cite{Meyer}, where the author provides the defining equations of these families. Furthermore, Cynk and Kocel-Cynk recently performed a complete classification of octic arrangements, modulo the action of the group of projective transformations, using a combinatorial data structure called an \textit{incidence table}, as it was presented in \cite{Cynks}.
In our paper we focus only on rigid examples of Cynk--Szemberg octic, and at the end we provide a comment devoted to one-parameter families of Cynk--Szemberg octics.
\section{A combinatorial non-freeness criterion for essential hyperplane arrangements}
We start with a general results devoted to hyperplane arrangements with only double lines. This result should be viewed as a straightforward generalization of a plane result which says that an arrangement of $d\geq 3$ lines with only double points is free if and only if $d=3$. 
\begin{proposition}
\label{gen}
Let $\mathcal{A} \subset \mathbb{C}^{4}$ be an essential arrangement of $k\geq 4$ hyperplanes such it has only double intersection lines, i.e., $t_{2}(1) =\binom{k}{2}$ and $t_{r}(1)=0$ for $r>2$. If $\mathcal{A}$ is free then $k=4$.
\end{proposition}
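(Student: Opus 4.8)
The plan is to push everything through Terao's factorization. Assume $\mathcal{A}$ is free. Since $\mathcal{A}$ is essential in $\mathbb{C}^{4}$, its exponents have the form $\exp(\mathcal{A})=(1,d_{1},d_{2},d_{3})$ with $1\le d_{1}\le d_{2}\le d_{3}$ (the lower bound $d_{i}\ge 1$ because essentiality kills the degree-zero part of $D_{0}(\mathcal{A})$, and this is also built into the convention for $\exp_{0}$). By Terao's factorization theorem,
$$\pi(\mathcal{A};t)=(1+t)(1+d_{1}t)(1+d_{2}t)(1+d_{3}t).$$
So the whole argument reduces to reading off enough coefficients of $\pi(\mathcal{A};t)$ from the intersection lattice and matching them against this product.

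First I would compute the low-degree coefficients of $\pi(\mathcal{A};t)$ combinatorially. The constant term is $1$, and the coefficient of $t$ is the number of hyperplanes $k$ (each hyperplane is a rank-$1$ flat with $\mu=-1$). For the coefficient of $t^{2}$ I use the hypothesis: the rank-$2$ flats of $L(\mathcal{A})$ are exactly the intersection lines, and since any two distinct hyperplanes in $\mathbb{C}^{4}$ meet in a rank-$2$ subspace, the condition $t_{2}(1)=\binom{k}{2}$ together with $t_{r}(1)=0$ for $r>2$ says precisely that the $\binom{k}{2}$ subspaces $H_{i}\cap H_{j}$ are pairwise distinct, each lying on exactly two hyperplanes. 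For such a flat $X=H_{i}\cap H_{j}$ the interval $[V,X]$ in $L(\mathcal{A})$ is a Boolean lattice of rank $2$, so $\mu(X)=1$. Hence the coefficient of $t^{2}$ in $\pi(\mathcal{A};t)$ equals $\binom{k}{2}$.

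Now I compare the two expressions. Writing $e_{1}=d_{1}+d_{2}+d_{3}$ and $e_{2}=d_{1}d_{2}+d_{1}d_{3}+d_{2}d_{3}$, the coefficient of $t$ gives $1+e_{1}=k$, i.e. $e_{1}=k-1$, and the coefficient of $t^{2}$ gives $e_{1}+e_{2}=\binom{k}{2}$, i.e. $e_{2}=\binom{k}{2}-(k-1)=\binom{k-1}{2}$. Therefore
$$d_{1}^{2}+d_{2}^{2}+d_{3}^{2}=e_{1}^{2}-2e_{2}=(k-1)^{2}-(k-1)(k-2)=k-1=e_{1}=d_{1}+d_{2}+d_{3}.$$
Since every $d_{i}\ge 1$ we have $d_{i}^{2}\ge d_{i}$, so the identity $\sum d_{i}^{2}=\sum d_{i}$ forces $d_{1}=d_{2}=d_{3}=1$, whence $k-1=e_{1}=3$ and $k=4$.

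The only real subtlety, and the step I would be most careful about, is the combinatorial evaluation of the $t^{2}$-coefficient: one must check both that the $\binom{k}{2}$ pairwise intersections really are distinct rank-$2$ flats — this is where "only double lines" is genuinely used, not merely as a numerical count — and that each such flat contributes $\mu(X)=+1$. Everything after that is the elementary observation that $\sum d_{i}^{2}\ge\sum d_{i}$ for positive integers, with equality only in the all-ones case; no information about the rank-$3$ or rank-$4$ flats of the arrangement is needed.
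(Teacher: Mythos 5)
Your proof is correct and follows essentially the same route as the paper: apply Terao's factorization, match the coefficients of $t$ and $t^{2}$ of $\pi(\mathcal{A};t)$ against the elementary symmetric functions of the exponents, and deduce $\sum d_{i}(d_{i}-1)=0$, hence $d_{1}=d_{2}=d_{3}=1$ and $k=4$. The only difference is that you spell out the combinatorial evaluation of the $t^{2}$-coefficient (distinctness of the $\binom{k}{2}$ double lines and $\mu(X)=1$), which the paper takes for granted.
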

\begin{proof}
Assume that $\mathcal{A}$ is free with ${\rm exp}_{0}(\mathcal{A}) = (d_{1},d_{2},d_{3})$. Without loss of generality we can assume that $1 \leq d_{1} \leq d_{2} \leq d_{3}$. Recall that the Terao factorization theorem implies the Poincar\'e polynomial of $\mathcal{A}$ splits over the rationals, namely
$$\pi(\mathcal{A};t) = (1+t)(1+d_{1}t)(1+d_{2}t)(1+d_{3}t),$$
which gives us the following system of equations:
$$\begin{cases}
d_{1}d_{2}+d_{1}d_{3}+d_{2}d_{3}+d_{1}+d_{2}+d_{3} = \binom{k}{2}\\
d_{1}+d_{2}+d_{3}+1=k.
\end{cases}$$
This gives
$$d_{1}(d_{1}-1) + d_{2}(d_{2}-1) + d_{3}(d_{3}-1)=0.$$
Recall that $d_{1}+d_{2}+d_{3}=k-1\geq 3$ and $d_{1},d_{2},d_{3}$ are positive integers, which implies that the only possible solution of the above equation is $d_{1}=d_{2}=d_{3}=1$, and hence $k=4$.
\end{proof}
\begin{remark}
Right now we want to discuss the freeness of essential arrangements of $k=4$ hyperplanes in $\mathbb{C}^{4}$ having only $6$ double lines. There are two cases:
\begin{itemize}
\item $t_{2}(1)=6$ and $t_{3}=4$. Then, up to projective equivalence, the arrangement $\mathcal{A}$ is given by $Q(x, y, z, w) = xyzw$ -- obviously $\mathcal{A}$ is free with ${\rm exp}_{0}(\mathcal{A}) = (1,1,1)$;
\item $t_{2}(1)=6$ and $t_{4}=1$. However, our arrangement cannot be essential, since the rank of the unique maximal element of the poset is not equal to the dimension.
\end{itemize}
These observations give us a complete picture of free essential hyperplane arrangements in $\mathbb{C}^{4}$ with only double intersection lines.
\end{remark}
Clearly, the fact that a hyperplane arrangement has only double lines does not imply that it has only triple points as intersections, as shown by the above example that comes from \cite{Cynks}.
\begin{example}
We consider family number \textnumero 266, which is defined by the following equation:
\[Q_{A,B}^{266}: \quad xyzw(y-2z+2w)(2x+y+2w)(Ax+By+Az)(Ax+(A+B)y-Az+Bw) = 0,\]
where $(A:B) \in \mathbb{P}^{1}$ subject to the following conditions: $(A:B) \not\in \{(1:0),(0:1)\}$, $A+4B\neq 0$, $A-2B\neq 0$, $A+2B\neq 0$ and finally $A+B\neq 0$.
Now we present the intersection poset for arrangements in this family. Fix the ground set $E= \{1, \ldots , 8\}$ giving labels of hyperplanes. Then we have the following incidences:
\begin{center}
\begin{tabular}{ c c c c c c c }
\multicolumn{7}{c}{$p$-fold lines} \\\hline \hline
 (1,2) & (1,3) & (1,4) & (1,5) & (1,6) & (1,7) & (1,8) \\
 (2,3) & (2,4) & (2,5) & (2,6) & (2,7) & (2,8) & (3,4) \\
 (3,5) & (3,6) & (3,7) & (3,8) & (4,5) & (4,6) & (4,7) \\
 (4,8) & (5,6) & (5,7) & (5,8) & (6,7) & (6,8) & (7,8) \\ 
\end{tabular}
\end{center}
and 
\begin{center}
\begin{tabular}{ c c c c c c c c }
\multicolumn{8}{c}{arrangement $q$-fold points} \\ \hline \hline 
(1,2,3,7) & (1,2,4,6) & (1,2,5,8) & (1,3,4) & (1,3,5,6) & (1,3,8) & (1,4,5) & (1,4,7) \\
 (1,4,8)  &  (1,5,7)  &  (1,6,7) &  (1,6,8) &  (1,7,8) &  (2,3,4,5) &  (2,3,6,8) &  (2,4,7) \\
 (2,4,8)  & (2,5,6,7) &  (2,7,8) &  (3,4,6) &  (3,4,7) &  (3,4,8) &  (3,5,7)  &  (3,5,8) \\
 (3,6,7)  &  (3,7,8)  &  (4,5,6) &  (4,5,7,8) &  (4,6,7) & (4,6,8) &  (5,6,8) & (6,7,8)
\end{tabular}.
\end{center}
\end{example}

\section{Rigid Cynk--Szemberg octic hyperplane arrangements}
In this section, we will use the machinery introduced in previous sections to describe the homological properties of the rigid Cynk--Szemberg octic hyperplane arrangements. These arrangements have trivial parameter spaces and they are unique up to projective equivalence. There are exactly $14$ such arrangements \cite{Cynks}. We will list these rigid arrangements coherently alongside Cynk and Kocel-Cynk list \cite{Cynks}. Our approach is systematic and provides all the necessary combinatorial details. All symbolic computations are performed using \texttt{SINGULAR} \cite{Singular}. \\\\
\noindent
\framebox[1.1\width]{Arrangement \textnumero 1.}
\vskip 5pt
\noindent
Let us denote this arrangement by $\mathcal{A}_{1} \subset \mathbb{P}^{3}$. It is given by the following defining equation:
$$Q_{1}(x,y,z,w) = xyzw(x+y)(y+z)(z+w)(w+x)=0.$$
Now we describe the intersection poset of $\mathcal{A}_{1}$. Fix the ground set $E= \{1, \ldots , 8\}$ giving labels of hyperplanes. Then we have the following incidences:
\begin{center}
\begin{tabular}{ c c c c c }
\multicolumn{5}{c}{$p$-fold lines} \\\hline \hline
 (1,2,5) & (1,3) & (1,4,8) & (1,6) & (1,7) \\
 (2,3,6) & (2,4) &  (2,7) & (2,8) & (3,4,7) \\
 (3,5) & (3,8) & (4,5) & (4,6) & (5,6) \\
 (5,7) & (5,8) & (6,7) & (6,8) & (7,8)
\end{tabular}
\end{center}
and 
\begin{center}
\begin{tabular}{ c c c c c c c }
\multicolumn{7}{c}{arrangement $q$-fold points} \\ \hline \hline 
(1,2,3,5,6) &  (1,2,4,5,8) & (1,2,5,7) & (1,3,4,7,8) & (1,4,6,8) &  (1,6,7) &  (2,3,4,6,7) \\
(2,3,6,8) &  (2,7,8) &  (3,4,5,7) &  & (3,5,8) &  (4,5,6) &  (5,6,7,8) 
\end{tabular}.
\end{center}
Have the data collected above, we can easily compute the Poincar\'e polynomial of $\mathcal{A}_{1}$, namely
$$\pi(\mathcal{A}_{1};t) = 1 + 8t + 24t^{2} + 31t^{3} + 14t^{4}.$$
Observe that $\pi(\mathcal{A}_{1};t)$ does not split over the rationals and hence $\mathcal{A}_{1}$ cannot be free. Using \texttt{SINGULAR} we can compute the degree sequence for $D_{0}(\mathcal{A}_{1})$, namely
$${\rm exp}_{0}(\mathcal{A}_{1}) = (2,3,3,3),$$
so its type is $t(\mathcal{A}_{1})=1$.
Finally, in order to show that $\mathcal{A}_{1}$ is nearly free, by \cite[Theorem 5.4]{DS} we have to check that the Jacobian ideal $J_{Q_{1}}$ is saturated, which can be verified by \texttt{SINGULAR}, hence $\mathcal{A}_{1}$ is nearly free.
\\\\
\noindent
\framebox[1.1\width]{Arrangement \textnumero 3.}
\vskip 5pt
\noindent
Let us denote this arrangement by $\mathcal{A}_{3} \subset \mathbb{P}^{3}$. It is given by the following defining equation:
$$Q_{3}(x,y,z,w) = xyzw(x+y)(y+z)(y-w)(x-y-z+w)=0.$$
We describe the intersection poset of $\mathcal{A}_{3}$. Fix the ground set $E= \{1, \ldots , 8\}$ giving labels of hyperplanes. Then we have the following incidences:
\begin{center}
\begin{tabular}{ c c c c c }
\multicolumn{5}{c}{$p$-fold lines} \\\hline \hline
 (1,2,5) & (1,3) & (1,4) & (1,6) & (1,7) \\
 (1,8) & (2,3,6) & (2,4,7) & (2,8) & (3,4) \\
 (3,5) & (3,7) & (3,8) & (4,5) & (4,6) \\
 (4,8) & (5,6) & (5,7) & (5,8) & (6,7) \\
       & (6,8) &       & (7,8) &

\end{tabular}
\end{center}
and 
\begin{center}
\begin{tabular}{ c c c c c }
\multicolumn{5}{c}{arrangement $q$-fold points} \\ \hline \hline 
 (1,2,3,5,6) &  (1,2,4,5,7) &  (1,2,5,8) &  (1,3,4) &  (1,3,7,8) \\
 (1,4,6,8) & (1,6,7) & (2,3,4,6,7) & (2,3,6,8) & (2,4,7,8) \\
 (3,4,5) & (3,4,8) & (3,5,7) & (3,5,8) & (4,5,6) \\
 & (4,5,8) & & (5,6,7,8) &

\end{tabular}.
\end{center}
We can compute the Poincar\'e polynomial of $\mathcal{A}_{3}$, namely
$$\pi(\mathcal{A}_{3};t) = 1 + 8t + 25t^{2} + 35t^{3} + 17t^{4}.$$
Observe that $\pi(\mathcal{A}_{3};t)$ does not split over the rationals and hence $\mathcal{A}_{3}$ cannot be free. We can compute the degree sequence for $D_{0}(\mathcal{A}_{3})$, namely
$${\rm exp}_{0}(\mathcal{A}_{3}) = (3,3,3,3,3),$$
hence its type is equal to
$$t(\mathcal{A}_{3})=3+3+3-8+1 = 2.$$
\\
\noindent
\framebox[1.1\width]{Arrangement \textnumero 19.}
\vskip 5pt
\noindent
Let us denote this arrangement by $\mathcal{A}_{19} \subset \mathbb{P}^{3}$. It is given by the following defining equation:
$$Q_{19}(x,y,z,w) = xyzw(x+y)(y+z)(x-z-w)(x+y+z-w)=0.$$
We describe the intersection poset of $\mathcal{A}_{19}$. Fix the ground set $E= \{1, \ldots , 8\}$ giving labels of hyperplanes. Then we have the following incidences:
\begin{center}
\begin{tabular}{ c c c c c c }
\multicolumn{6}{c}{$p$-fold lines} \\\hline \hline
 (1,2,5) & (1,3) & (1,4) & (1,6) & (1,7) & (1,8) \\
 (2,3,6) & (2,4) & (2,7) & (2,8) & (3,4) & (3,5) \\
 (3,7) & (3,8) & (4,5) & (4,6) & (4,7) & (4,8) \\
 (5,6) & (5,7) & (5,8) & (6,7) & (6,8) & (7,8)
\end{tabular}
\end{center}
and 
\begin{center}
\begin{tabular}{ c c c c c }
\multicolumn{5}{c}{arrangement $q$-fold points} \\ \hline \hline 
(1,2,3,5,6) & (1,2,4,5) & (1,2,5,7) & (1,2,5,8) & (1,3,4,7) \\
(1,3,8) & (1,4,6,8) & (1,6,7) & (1,7,8) & (2,3,4,6) \\
(2,3,6,7,8) &  (2,4,7) & (2,4,8) & (3,4,5,8) & (3,5,7)\\
(4,5,6,7) & (4,7,8) &  & (5,6,8) &  (5,7,8)
\end{tabular}.
\end{center}
We can compute the Poincar\'e polynomial of $\mathcal{A}_{19}$, namely
$$\pi(\mathcal{A}_{19};t) = 1 + 8t + 26t^{2} + 38t^{3} + 19t^{4}.$$
Observe that $\pi(\mathcal{A}_{19};t)$ does not split over the rationals and hence $\mathcal{A}_{19}$ cannot be free. We can compute the degree sequence for $D_{0}(\mathcal{A}_{19})$, namely
$${\rm exp}_{0}(\mathcal{A}_{19}) = (3,3,3,3),$$
hence its type is equal to
$$t(\mathcal{A}_{19})=3+3+3-8+1 = 2.$$
\\
\noindent
\framebox[1.1\width]{Arrangement \textnumero 32.}
\vskip 5pt
\noindent
Let us denote this arrangement by $\mathcal{A}_{32} \subset \mathbb{P}^{3}$. It is given by the following defining equation:
$$Q_{32}(x,y,z,w) = xyzw(x+y)(y+z)(x-y-z-w)(x+y-z+w)=0.$$
We describe the intersection poset of $\mathcal{A}_{32}$. Fix the ground set $E= \{1, \ldots , 8\}$ giving labels of hyperplanes. Then we have the following incidences:
\begin{center}
\begin{tabular}{ c c c c c c }
\multicolumn{6}{c}{$p$-fold lines} \\\hline \hline
(1,2,5) & (1,3) & (1,4) & (1,6) & (1,7) & (1,8) \\
(2,3,6) & (2,4) & (2,7) & (2,8) & (3,4) & (3,5) \\
(3,7) & (3,8) & (4,5) & (4,6) & (4,7) & (4,8) \\
(5,6) & (5,7) & (5,8) & (6,7) & (6,8) & (7,8)

\end{tabular}
\end{center}
and 
\begin{center}
\begin{tabular}{ c c c c c }
\multicolumn{5}{c}{arrangement $q$-fold points} \\ \hline \hline 
(1,2,3,5,6) &  (1,2,4,5) & (1,2,5,7) &  (1,2,5,8) &  (1,3,4) \\
(1,3,7,8) & (1,4,6,7) & (1,4,8) & (1,6,8) & (2,3,4,6) \\  
(2,3,6,7) & (2,3,6,8) & (2,4,7,8) & (3,4,5,8) & (3,4,7) \\
(3,5,7) & (4,5,6) & (4,5,7) & (4,6,8) & (5,6,7,8)

\end{tabular}.
\end{center}
We can compute the Poincar\'e polynomial of $\mathcal{A}_{32}$, namely
$$\pi(\mathcal{A}_{32};t) = 1 + 8t + 26t^{2} + 39t^{3} + 20t^{4}.$$
Observe that $\pi(\mathcal{A}_{32};t)$ does not split over the rationals and hence $\mathcal{A}_{32}$ cannot be free. We can compute the degree sequence for $D_{0}(\mathcal{A}_{32})$, namely
$${\rm exp}_{0}(\mathcal{A}_{32}) = (3,3,3,4,4,4),$$
hence its type is equal to
$$t(\mathcal{A}_{32})=3+3+3-8+1 = 2.$$ 
\\

\noindent
\framebox[1.1\width]{Arrangement \textnumero 69.}
\vskip 5pt
\noindent
Let us denote this arrangement by $\mathcal{A}_{69} \subset \mathbb{P}^{3}$. It is given by the following defining equation:
$$Q_{69}(x,y,z,w) = xyzw(x+y)(x-y+z)(x-y-w)(x+y-z-w)=0.$$
We describe the intersection poset of $\mathcal{A}_{69}$. Fix the ground set $E= \{1, \ldots , 8\}$ giving labels of hyperplanes. Then we have the following incidences:
\begin{center}
\begin{tabular}{ c c c c c c }
\multicolumn{6}{c}{$p$-fold lines} \\\hline \hline
(1,2,5) & (1,3) & (1,4) & (1,6) & (1,7) & (1,8) \\
(2,3) & (2,4) & (2,6) & (2,7) & (2,8) & (3,4) \\
(3,5) & (3,6) & (3,7) & (3,8) & (4,5) & (4,6) \\
(4,7) & (4,8) & (5,6) & (5,7) & (5,8) & (6,7) \\
      &       & (6,8) & (7,8) &       &
\end{tabular}
\end{center}
and 
\begin{center}
\begin{tabular}{ c c c c c }
\multicolumn{5}{c}{arrangement $q$-fold points} \\ \hline \hline 
 (1,2,3,5,6) & (1,2,4,5,7) &  (1,2,5,8) &  (1,3,4) & (1,3,7) \\  (1,3,8) & (1,4,6,8) & (1,6,7) & (1,7,8) & (2,3,4) \\
 (2,3,7,8) & (2,4,6) & (2,4,8) & (2,6,7) & (2,6,8) \\
 (3,4,5,8) & (3,4,6,7) & (3,5,7) & (3,6,8) & (4,5,6) \\
           & (4,7,8) &  & (5,6,7,8) &
\end{tabular}.
\end{center}
We can compute the Poincar\'e polynomial of $\mathcal{A}_{69}$, namely
$$\pi(\mathcal{A}_{69};t) = 1 + 8t + 27t^{2} + 41t^{3} + 21t^{4}.$$
Observe that $\pi(\mathcal{A}_{69};t)$ does not split over the rationals and hence $\mathcal{A}_{69}$ cannot be free. We can compute the degree sequence for $D_{0}(\mathcal{A}_{69})$, namely
$${\rm exp}_{0}(\mathcal{A}_{69}) = (3,3,3,4,4),$$
hence its type is equal to
$$t(\mathcal{A}_{69})=3+3+3-8+1 = 2.$$


\noindent
\framebox[1.1\width]{Arrangement \textnumero 93.}
\vskip 5pt
\noindent
Let us denote this arrangement by $\mathcal{A}_{93} \subset \mathbb{P}^{3}$. It is given by the following defining equation:
$$Q_{93}(x,y,z,w) = xyzw(x+y)(x-y+z)(y-z-w)(x+z-w)=0.$$
We describe the intersection poset of $\mathcal{A}_{93}$. Fix the ground set $E= \{1, \ldots , 8\}$ giving labels of hyperplanes. Then we have the following incidences:
\begin{center}
\begin{tabular}{ c c c c c c }
\multicolumn{6}{c}{$p$-fold lines} \\\hline \hline
(1,2,5) &  (1,3) &  (1,4) &  (1,6) &  (1,7) &  (1,8)\\
(2,3) &  (2,4) &  (2,6) &  (2,7) &  (2,8) &  (3,4) \\  
(3,5) &  (3,6) &  (3,7) &  (3,8) &  (4,5) &  (4,6) \\  
(4,7) &  (4,8) &  (5,6) &  (5,7) &  (5,8) &  (6,7)\\  
& &  (6,8) & (7,8) & & \\
\end{tabular}
\end{center}
and 
\begin{center}
\begin{tabular}{ c c c c c }
\multicolumn{5}{c}{arrangement $q$-fold points} \\ \hline \hline 
 (1,2,3,5,6) &  (1,2,4,5) &  (1,2,5,7) &  (1,2,5,8) &  (1,3,4,8) \\
 (1,3,7) &  (1,4,6,7) &  (1,6,8) &  (1,7,8) &  (2,3,4,7)\\  
 (2,3,8) &  (2,4,6,8) &  (2,6,7) &  (2,7,8) &  (3,4,5)\\ 
 (3,4,6) &  (3,5,7) &  (3,5,8) &  (3,6,7,8) &  (4,5,6)\\ 
 & (4,5,7,8) &  (5,6,7) &  (5,6,8) & \\
\end{tabular}.
\end{center}
We can compute the Poincar\'e polynomial of $\mathcal{A}_{93}$, namely
$$\pi(\mathcal{A}_{93};t) = 1 + 8t + 27t^{2} + 42t^{3} + 22t^{4}.$$
Observe that $\pi(\mathcal{A}_{93};t)$ does not split over the rationals and hence $\mathcal{A}_{93}$ cannot be free. We can compute the degree sequence for $D_{0}(\mathcal{A}_{93})$, namely
$${\rm exp}_{0}(\mathcal{A}_{93}) = (3,3,4,4,4,4,4),$$
hence its type is equal to
$$t(\mathcal{A}_{93})=3+3+4-8+1 = 3.$$

\noindent
\framebox[1.1\width]{Arrangement \textnumero 238.}
\vskip 5pt
\noindent
Let us denote this arrangement by $\mathcal{A}_{283} \subset \mathbb{P}^{3}$. It is given by the following defining equation:
$$Q_{238}(x,y,z,w) = xyzw(x+y+z-w)(x+y-z+w)(x-y+z+w)(-x+y+z+w)=0.$$
We describe the intersection poset of $\mathcal{A}_{238}$. Fix the ground set $E = \{1, \ldots , 8\}$ giving labels of hyperplanes. Then we have the following incidences:
\begin{center}
\begin{tabular}{ c c c c c c c }
\multicolumn{6}{c}{$p$-fold lines} \\\hline \hline
 (1,2) &  (1,3) &  (1,4) &  (1,5) &  (1,6) &  (1,7) &  (1,8)\\
 (2,3) &  (2,4) &  (2,5) &  (2,6) &  (2,7) &  (2,8) &  (3,4)\\
 (3,5) &  (3,6) &  (3,7) &  (3,8) &  (4,5) &  (4,6) &  (4,7)\\
 (4,8) &  (5,6) &  (5,7) &  (5,8) &  (6,7) &  (6,8) &  (7,8)\\
\end{tabular}
\end{center}
and 
\begin{center}
\begin{tabular}{ c c c c c }
\multicolumn{5}{c}{arrangement $q$-fold points} \\ \hline \hline 
 (1,2,3) & (1,2,4) & (1,2,5,6) &  (1,2,7,8) &  (1,3,4) \\
 (1,3,5,7) &  (1,3,6,8) &  (1,4,5,8) &  (1,4,6,7) &  (2,3,4)\\
 (2,3,5,8) &  (2,3,6,7) & (2,4,5,7) &  (2,4,6,8) &  (3,4,5,6)\\  (3,4,7,8) &  (5,6,7) &  (5,6,8) &  (5,7,8) &  (6,7,8) \\ 
\end{tabular}.
\end{center}
By Proposition \ref{gen}, the arrangement $\mathcal{A}_{238}$ cannot be free. We can compute the degree sequence for $D_{0}(\mathcal{A}_{238})$, namely
$${\rm exp}_{0}(\mathcal{A}_{238}) = (3,3,4,4,4,4),$$
hence its type is equal to
$$t(\mathcal{A}_{238})=3+3+4-8+1 = 3.$$


\noindent
\framebox[1.1\width]{Arrangement \textnumero 239.}
\vskip 5pt
\noindent
Let us denote this arrangement by $\mathcal{A}_{239} \subset \mathbb{P}^{3}$. It is given by the following defining equation:
$$Q_{239}(x,y,z,w) = xyzw(x+y+z)(x+y+w)(x+z+w)(y+z+w)=0.$$
We describe the intersection poset of $\mathcal{A}_{239}$. Fix the ground set $E= \{1, \ldots , 8\}$ giving labels of hyperplanes. Then we have the following incidences:
\begin{center}
\begin{tabular}{ c c c c c c c }
\multicolumn{6}{c}{$p$-fold lines} \\\hline \hline
 (1,2) &  (1,3) &  (1,4) &  (1,5) &  (1,6) & 
 (1,7) &  (1,8) \\
 (2,3) &  (2,4) &  (2,5) &  (2,6) &  (2,7) & 
 (2,8) &  (3,4)\\  
 (3,5) & (3,6) &  (3,7) &  
 (3,8) &  (4,5) &  (4,6) &  (4,7)\\  (4,8) &
 (5,6) &  (5,7) &  (5,8) &  (6,7) &  (6,8) & 
 (7,8)
\end{tabular}
\end{center}
and 
\begin{center}
\begin{tabular}{ c c c c c }
\multicolumn{5}{c}{arrangement $q$-fold points} \\ \hline \hline 
(1,2,3,5) &  (1,2,4,6) & (1,2,7,8) &  (1,3,4,7) &  (1,3,6,8)\\
(1,4,5,8) &  (1,5,6) & (1,5,7) & (1,6,7) &  (2,3,4,8)\\
(2,3,6,7) & (2,4,5,7) & (2,5,6) & (2,5,8) &  (2,6,8)\\
(3,4,5,6) & (3,5,7) & (3,5,8) & (3,7,8) &  (4,6,7)\\
(4,6,8) &  (4,7,8) & (5,6,7) & (5,6,8) &  (5,7,8)\\
& & (6,7,8) & &
\end{tabular}.
\end{center}
By Proposition \ref{gen}, the arrangement $\mathcal{A}_{239}$ cannot be free. Using \texttt{SINGULAR} we can compute the degree sequence for $D_{0}(\mathcal{A}_{239})$, namely
$${\rm exp}_{0}(\mathcal{A}_{239}) = (4,4,4,4,4,4,4,4,4,4),$$
hence its type is equal to
$$t(\mathcal{A}_{239})= 4+4+4-8+1 = 5.$$


\noindent
\framebox[1.1\width]{Arrangement \textnumero 240.}
\vskip 5pt
\noindent
Let us denote this arrangement by $\mathcal{A}_{240} \subset \mathbb{P}^{3}$. It is given by the following defining equation:
$$Q_{240}(x,y,z,w) = xyzw(x+y+z)(x+y-z+w)(x-y+z+w)(x-y-z-w)=0.$$
We describe the intersection poset of $\mathcal{A}_{240}$. Fix the ground set $E= \{1, \ldots , 8\}$ giving labels of hyperplanes. Then we have the following incidences:
\begin{center}
\begin{tabular}{ c c c c c c c }
\multicolumn{6}{c}{$p$-fold lines} \\\hline \hline
 (1,2) &  (1,3) &  (1,4) &  (1,5) &  (1,6) &  (1,7) &  (1,8)\\
 (2,3) &  (2,4) &  (2,5) &  (2,6) &  (2,7) &  (2,8) &  (3,4)\\
 (3,5) &  (3,6) & (3,7) & (3,8) & (4,5) &  (4,6) &  (4,7)\\
 (4,8) &  (5,6) &  (5,7) &  (5,8) & (6,7) &  (6,8) &  (7,8)
\end{tabular}
\end{center}
and 
\begin{center}
\begin{tabular}{ c c c c c }
\multicolumn{5}{c}{arrangement $q$-fold points} \\ \hline \hline 
(1,2,3,5) &  (1,2,4) &  (1,2,6) &  (1,2,7,8) 
&  (1,3,4) \\
(1,3,6,8) &  (1,3,7) &  (1,4,5,8) &  (1,4,6,7) &  (1,5,6) \\
(1,5,7) &  (2,3,4) & (2,3,6,7) & (2,3,8) &  (2,4,5,7) \\
(2,4,6,8) &  (2,5,6) &  (2,5,8) &  (3,4,5,6) &  (3,4,7,8) \\
(3,5,7) & (3,5,8) & (5,6,7) & (5,6,8) &  (5,7,8)\\
& & (6,7,8) & &
\end{tabular}.
\end{center}
By Proposition \ref{gen}, the arrangement $\mathcal{A}_{240}$ cannot be free. Using \texttt{SINGULAR} we can compute the degree sequence for $D_{0}(\mathcal{A}_{240})$, namely
$${\rm exp}_{0}(\mathcal{A}_{240}) = (4,4,4,4,4,4,4,4,4,4),$$
hence its type is equal to
$$t(\mathcal{A}_{240})= 4 + 4 + 4 - 8+1 = 5.$$

\noindent
\framebox[1.1\width]{Arrangement \textnumero 241.}
\vskip 5pt
\noindent
Let us denote this arrangement by $\mathcal{A}_{241} \subset \mathbb{P}^{3}$. It is given by the following defining equation:
$$Q_{241}(x,y,z,w) = xyzw(x+y+z+w)(x+y-z-w)(y-z+w)(x+z-w)=0.$$
We describe the intersection poset of $\mathcal{A}_{241}$. Fix the ground set $E= \{1, \ldots , 8\}$ giving labels of hyperplanes. Then we have the following incidences:
\begin{center}
\begin{tabular}{ c c c c c c c }
\multicolumn{6}{c}{$p$-fold lines} \\\hline \hline
 (1,2) &  (1,3) &  (1,4) &  (1,5) &  (1,6) &  (1,7) &  (1,8)\\
 (2,3) &  (2,4) &  (2,5) &  (2,6) &  (2,7) &  (2,8) &  (3,4)\\
 (3,5) &  (3,6) &  (3,7) &  (3,8) &  (4,5) &  (4,6) &  (4,7)\\
 (4,8) &  (5,6) & (5,7) & (5,8) & (6,7) &  (6,8) &  (7,8)
\end{tabular}
\end{center}
and 
\begin{center}
\begin{tabular}{ c c c c c }
\multicolumn{5}{c}{arrangement $q$-fold points} \\ \hline \hline 
(1,2,3) &  (1,2,4) &  (1,2,5,6) &  (1,2,7,8) &  (1,3,4,8) \\
(1,3,5,7) & (1,3,6) & (1,4,5) & (1,4,6,7) &  (1,5,8)\\
(1,6,8) &  (2,3,4,7) & (2,3,5) &  (2,3,6,8) &  (2,4,5,8)\\
(2,4,6) & (2,5,7) & (2,6,7) & (3,4,5,6) &  (3,5,8) \\
(3,6,7) & (3,7,8) & (4,5,7) & (4,6,8) &  (4,7,8)\\
& & (5,6,7,8) & &
\end{tabular}.
\end{center}
By Proposition \ref{gen}, the arrangement $\mathcal{A}_{241}$ cannot be free. Using \texttt{SINGULAR} we can compute the degree sequence for $D_{0}(\mathcal{A}_{241})$, namely
$${\rm exp}_{0}(\mathcal{A}_{241}) = (3,4,4,4,4,4,4),$$
hence its type is equal to
$$t(\mathcal{A}_{241})= 3 + 4 + 4 - 8+1 = 4.$$

\noindent
\framebox[1.1\width]{Arrangement \textnumero 245.}
\vskip 5pt
\noindent
Let us denote this arrangement by $\mathcal{A}_{245} \subset \mathbb{P}^{3}$. It is given by the following defining equation:
$$Q_{245}(x,y,z,w) = xyzw(x+y+z)(y+z+w)(x-y-w)(x-y+z+w)=0.$$
We describe the intersection poset of $\mathcal{A}_{245}$. Fix the ground set $E = \{1, \ldots , 8\}$ giving labels of hyperplanes. Then we have the following incidences:
\begin{center}
\begin{tabular}{ c c c c c c c }
\multicolumn{6}{c}{$p$-fold lines} \\\hline \hline
(1,2) &  (1,3) & (1,4) &  (1,5) & (1,6) &  (1,7) &  (1,8)\\
(2,3) &  (2,4) & (2,5) &  (2,6) &  (2,7) &  (2,8) &  (3,4)\\
(3,5) &  (3,6) & (3,7) &  (3,8) &  (4,5) &  (4,6) &  (4,7)\\
(4,8) &  (5,6) &  (5,7) & (5,8) & (6,7) &  (6,8) &  (7,8)
\end{tabular}
\end{center}
and 
\begin{center}
\begin{tabular}{ c c c c c c c }
\multicolumn{5}{c}{arrangement $q$-fold points} \\ \hline \hline 
 (1,2,3,5) &  (1,2,4,7) &  (1,2,6,8) &  (1,3,4) &  (1,3,6,7) & (1,3,8) & (1,4,5,6) \\
 (1,4,8) & (1,5,7) &  (1,5,8) & (1,7,8) & (2,3,4,6) & (2,3,7) & (2,3,8) \\
 (2,4,5,8) & (2,5,6,7) & (2,7,8) & (3,4,5) & (3,4,7,8) & (3,5,6) & (3,5,7) \\
 (3,5,8) & (3,6,8) & (4,5,7) &  (4,6,7) & (4,6,8)&  (5,6,8)& (5,7,8) \\
 & & & (6,7,8) & & &
\end{tabular}.
\end{center}
By Proposition \ref{gen}, the arrangement $\mathcal{A}_{245}$ cannot be free. Using \texttt{SINGULAR} we can compute the degree sequence for $D_{0}(\mathcal{A}_{245})$, namely
$${\rm exp}_{0}(\mathcal{A}_{245}) = (4,4,4,4,4,4,4,4,4),$$
hence its type is equal to
$$t(\mathcal{A}_{245})=  4 + 4  + 4 - 8+1 = 5.$$

\noindent
\framebox[1.1\width]{Arrangement $\mathfrak{A}$.}
\vskip 5pt
\noindent
Let us denote this arrangement by $\mathcal{A}_{\mathfrak{A}} \subset \mathbb{P}^{3}$. It is given by the following defining equation:
$$Q_{\mathfrak{A}}(x,y,z,w) =xyzw(x+y)(x+y+z-w)((\sqrt{-3}-1)x-2y+(\sqrt{-3}-1)z)(2y+(-\sqrt{-3}+1)z-2w) = 0.$$
We describe the intersection poset of $\mathcal{A}_{\mathfrak{A}}$. Fix the ground set $E= \{1, \ldots , 8\}$ giving labels of hyperplanes. Then we have the following incidences:
\begin{center}
\begin{tabular}{ c c c c c c c }
\multicolumn{6}{c}{$p$-fold lines} \\\hline \hline
 (1,2,5) &  (1,3) & (1,4) & (1,6) & (1,7) & (1,8) \\
 (2,3) & (2,4) & (2,6) & (2,7) & (2,8) & (3,4) \\
 (3,5) & (3,6) & (3,7) & (3,8) & (4,5) & (4,6) \\
 (4,7) & (4,8) & (5,6) & (5,7) & (5,8) & (6,7) \\
       &       & (6,8) & (7,8) &       &
 
\end{tabular}
\end{center}
and 
\begin{center}
\begin{tabular}{ c c c c c }
\multicolumn{5}{c}{arrangement $q$-fold points} \\ \hline \hline 
(1,2,3,5,7) & (1,2,4,5) & (1,2,5,6) & (1,2,5,8) & (1,3,4) \\
(1,3,6,8) & (1,4,6) & (1,4,7,8) & (1,6,7) & (2,3,4,8) \\
(2,3,6) & (2,4,6,7) & (2,6,8) & (2,7,8) & (3,4,5,6) \\
(3,4,7) & (3,5,8) & (3,6,7) & (3,7,8) & (4,5,7) \\
& (4,5,8) & (4,6,8) & (5,6,7,8) &
\end{tabular}.
\end{center}
We can compute the Poincar\'e polynomial of $\mathcal{A}_{\mathfrak{A}}$, namely
$$\pi(\mathcal{A}_{\mathfrak{A}};t) = 1 + 8t + 27t^{2} + 42t^{3} + 22t^{4},$$ 
hence $\mathcal{A}_{\mathfrak{A}}$ cannot be free. Using \texttt{SINGULAR} we can compute the degree sequence for $D_{0}(\mathcal{A}_{\mathfrak{A}})$, namely
$${\rm exp}_{0}(\mathcal{A}_{\mathfrak{A}}) = (3,3,4,4,4,4,4),$$
hence its type is equal to
$$t(\mathcal{A}_{\mathfrak{A}})= 3+3+4-8+1=3.$$

\noindent
\framebox[1.1\width]{Arrangement $\mathfrak{B}$.}
\vskip 5pt
\noindent
Let us denote this arrangement by $\mathcal{A}_{\mathfrak{B}} \subset \mathbb{P}^{3}$. It is given by the following defining equation:
\begin{multline*}
 Q_{\mathfrak{B}}(x,y,z,w) = xyzw(x+y+z)(x+z-w)((\sqrt{-3}-1)x+(\sqrt{-3}+1)y-2z+2w) \\
\times((\sqrt{-3}-1)x+(\sqrt{-3}-1)y-2z+(\sqrt{-3}+1)w)= 0.   
\end{multline*}
We describe the intersection poset of $\mathcal{A}_{\mathfrak{B}}$. Fix the ground set $E= \{1, \ldots , 8\}$ giving labels of hyperplanes. Then we have the following incidences:
\begin{center}
\begin{tabular}{ c c c c c c c }
\multicolumn{7}{c}{$p$-fold lines} \\\hline \hline
(1,2) & (1,3) & (1,4) & (1,5) & (1,6) & (1,7) & (1,8) \\
(2,3) & (2,4) & (2,5) & (2,6) & (2,7) & (2,8) & (3,4) \\
(3,5) & (3,6) & (3,7) & (3,8) & (4,5) & (4,6) & (4,7) \\
(4,8) & (5,6) & (5,7) & (5,8) & (6,7) & (6,8) & (7,8)

\end{tabular}
\end{center}
and 
\begin{center}
\begin{tabular}{ c c c c c c c }
\multicolumn{7}{c}{arrangement $q$-fold points} \\ \hline \hline 
 (1,2,3,5) & (1,2,4) & (1,2,6,7) & (1,2,8) & (1,3,4,6) & (1,3,7,8) & (1,4,5) \\
 (1,4,7) & (1,4,8) & (1,5,6,8) & (1,5,7) & (2,3,4) & (2,3,6) & (2,3,7) \\
 (2,3,8) & (2,4,5,6) &  (2,4,7,8) & (2,5,7) & (2,5,8) & (2,6,8) & (3,4,5,8) \\
 (3,4,7) & (3,5,6,7) & (3,6,8) & (4,5,7) & (4,6,7) & (4,6,8) & (5,7,8) \\
 & & & (6,7,8) & & &

\end{tabular}.
\end{center}
By Proposition \ref{gen}, the arrangement $\mathcal{A}_{\mathfrak{B}}$ cannot be free. Using \texttt{SINGULAR} we can compute the degree sequence for $D_{0}(\mathcal{A}_{\mathfrak{B}})$, namely
$${\rm exp}_{0}(\mathcal{A}_{\mathfrak{B}}) = (4,4,4,4,4,4,4,4,4),$$
hence its type is equal to
$$t(\mathcal{A}_{\mathfrak{B}})= 4+4+4-8+1=5.$$

\noindent
\framebox[1.1\width]{Arrangement $\mathfrak{C}$.}
\vskip 5pt
\noindent
Let us denote this arrangement by $\mathcal{A}_{\mathfrak{C}} \subset \mathbb{P}^{3}$. It is given by the following defining equation:
\begin{multline*}
 Q_{\mathfrak{C}}(x,y,z,w) = xyzw(x+y+z)((\sqrt{5}-1)y-2z+2w) (2x+2y+(\sqrt{5}-1)w) \\
 \times ((-\sqrt{5}+3)x+2y+(-\sqrt{5}+1)z+(\sqrt{5}-1)w)= 0.   
\end{multline*}
We describe the intersection poset of $\mathcal{A}_{\mathfrak{C}}$. Fix the ground set $E= \{1, \ldots , 8\}$ giving labels of hyperplanes. Then we have the following incidences:
\begin{center}
\begin{tabular}{ c c c c c c c }
\multicolumn{7}{c}{$p$-fold lines} \\\hline \hline
 (1,2) & (1,3) & (1,4) & (1,5) & (1,6) & (1,7) & (1,8) \\
 (2,3) & (2,4) & (2,5) & (2,6) & (2,7) & (2,8) & (3,4) \\
 (3,5) & (3,6) & (3,7) & (3,8) & (4,5) & (4,6) & (4,7) \\
 (4,8) & (5,6) & (5,7) & (5,8) & (6,7) & (6,8) & (7,8)
\end{tabular}
\end{center}
and 
\begin{center}
\begin{tabular}{ c c c c c c c }
\multicolumn{7}{c}{arrangement $q$-fold points} \\ \hline \hline 
 (1,2,3,5) &  (1,2,4,7) & (1,2,6,8) & (1,3,4) & (1,3,6) & (1,3,7,8) & (1,4,5) \\
 (1,4,6) & (1,4,8) & (1,5,6,7) & (1,5,8) & (2,3,4,6) & (2,3,7) & (2,3,8) \\
 (2,4,5) & (2,4,8) & (2,5,6) & (2,5,7,8) & (2,6,7) & (3,4,5,7) & (3,4,8) \\
 (3,5,6) & (3,5,8) & (3,6,7) & (3,6,8) & (4,5,6,8) & (4,6,7) & (4,7,8)\\
 & & & (6,7,8) & & &

\end{tabular}.
\end{center}
By Proposition \ref{gen}, the arrangement $\mathcal{A}_{\mathfrak{C}}$ cannot be free. Using \texttt{SINGULAR} we can compute the degree sequence for $D_{0}(\mathcal{A}_{\mathfrak{C}})$, namely
$${\rm exp}_{0}(\mathcal{A}_{\mathfrak{C}}) = (4,4,4,4,4,4,4,4,4),$$
hence its type is equal to
$$t(\mathcal{A}_{\mathfrak{C}})= 4+4+4-8+1=5.$$

Based on the above discussion, we can formulate the main result of this section.
\begin{theorem}
\label{rigOC}
If $\mathcal{A} \subset \mathbb{P}^{3}$ is a rigid Cynk--Szemberg octic hyperplane arrangement enlisted in \cite{Cynks}, then $t(\mathcal{A}) \in \{1,2,3,4,5\}$.
\end{theorem}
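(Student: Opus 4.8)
The plan is to prove the theorem by exhausting the classification: the fourteen rigid Cynk--Szemberg octics listed in \cite{Cynks} are precisely the arrangements $\mathcal{A}_{1},\mathcal{A}_{3},\mathcal{A}_{19},\mathcal{A}_{32},\mathcal{A}_{69},\mathcal{A}_{93},\mathcal{A}_{238},\mathcal{A}_{239},\mathcal{A}_{240},\mathcal{A}_{241},\mathcal{A}_{245},\mathcal{A}_{\mathfrak{A}},\mathcal{A}_{\mathfrak{B}},\mathcal{A}_{\mathfrak{C}}$ treated case by case above. For each one we have exhibited a defining equation $Q$, the full intersection data ($p$-fold lines and arrangement $q$-fold points), and -- via \texttt{SINGULAR} -- the degree sequence $\exp_{0}(\mathcal{A})$ of $D_{0}(\mathcal{A})$. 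Each such $\mathcal{A}$ is essential with $k=8$ hyperplanes (essentiality being visible from the presence of a rank-$4$ flat in the poset), and its degree sequence has length $s\geq 3=n-1$, so the type is read off directly from the definition as $t(\mathcal{A})=d_{1}+d_{2}+d_{3}-7$, where $d_{1}\leq d_{2}\leq d_{3}$ are the three smallest exponents.

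First I would establish $t(\mathcal{A})\geq 1$ for every rigid Cynk--Szemberg octic, i.e. rule out freeness, splitting into two subfamilies. For the arrangements possessing a triple line, namely $\mathcal{A}_{1},\mathcal{A}_{3},\mathcal{A}_{19},\mathcal{A}_{32},\mathcal{A}_{69},\mathcal{A}_{93},\mathcal{A}_{\mathfrak{A}}$, I compute the Poincar\'e polynomial from the intersection data and observe that it does not factor into linear terms over $\mathbb{Q}$; by Terao's factorization theorem such an arrangement cannot be free, so by the Example on free arrangements $t(\mathcal{A})\neq 0$. For the remaining seven arrangements $\mathcal{A}_{238},\mathcal{A}_{239},\mathcal{A}_{240},\mathcal{A}_{241},\mathcal{A}_{245},\mathcal{A}_{\mathfrak{B}},\mathcal{A}_{\mathfrak{C}}$, which have only double intersection lines (so $t_{2}(1)=\binom{8}{2}$ and $t_{r}(1)=0$ for $r>2$), Proposition \ref{gen} applies with $k=8>4$ and immediately forces non-freeness, hence $t(\mathcal{A})\neq 0$. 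Since the exponents are positive integers and $s\geq 3$, the type is a non-negative integer, so in all cases $t(\mathcal{A})\geq 1$.

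Next I would read off the upper bound from the \texttt{SINGULAR} computations recorded above: in each case $d_{1}+d_{2}+d_{3}\in\{8,9,10,11,12\}$, concretely $8$ for $\mathcal{A}_{1}$ (type $1$); $9$ for $\mathcal{A}_{3},\mathcal{A}_{19},\mathcal{A}_{32},\mathcal{A}_{69}$ (type $2$); $10$ for $\mathcal{A}_{93},\mathcal{A}_{238},\mathcal{A}_{\mathfrak{A}}$ (type $3$); $11$ for $\mathcal{A}_{241}$ (type $4$); and $12$ for $\mathcal{A}_{239},\mathcal{A}_{240},\mathcal{A}_{245},\mathcal{A}_{\mathfrak{B}},\mathcal{A}_{\mathfrak{C}}$ (type $5$). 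Thus $t(\mathcal{A})\leq 5$, and combined with the lower bound we get $t(\mathcal{A})\in\{1,2,3,4,5\}$, with every value in this set attained.

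I expect the main obstacle to be twofold. First, one must be certain that the list of fourteen arrangements used above is genuinely complete and matches the classification in \cite{Cynks} up to projective equivalence -- this is a bookkeeping and referencing matter rather than a mathematical one, but it is what makes the statement cover \emph{every} rigid Cynk--Szemberg octic. Second, and more delicate, is the correctness and reproducibility of the minimal free resolutions of $D_{0}(\mathcal{A})$ computed in \texttt{SINGULAR}: although the type depends only on the three smallest exponents, one must be sure the resolution is minimal and the $0$-th syzygy correctly identified; in the case of $\mathcal{A}_{1}$ this is reinforced by verifying saturation of the Jacobian ideal (so that the nearly free, type $1$, description from \cite[Theorem 5.4]{DS} is legitimate). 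Everything else -- computing Poincar\'e polynomials from the posets, checking non-factorization over $\mathbb{Q}$, and applying the arithmetic of the type formula -- is routine.
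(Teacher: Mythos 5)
Your proposal is correct and follows essentially the same route as the paper: the theorem is proved by exhausting the fourteen rigid arrangements case by case, reading the type off the \texttt{SINGULAR}-computed degree sequences (with non-freeness obtained via non-splitting of the Poincar\'e polynomial for the arrangements with triple lines and via Proposition \ref{gen} for those with only double lines), and your tabulation of the resulting types $1$ through $5$ matches the paper's computations exactly.
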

\section{A few remarks about homological properties of one-parameter families of Cynk--Szemberg octics}
According to \cite[Section 4.2.5]{Meyer}, there are $63$ one-parameter families of Cynk--Szemberg octic hyperplane arrangements. It is natural to check whether these families possess interesting homological properties, especially with respect to the freeness properties. We examined general elements in these $63$ one-parameter families and concluded that \textit{there are no free nor nearly free examples}. There are $3$ families such that their general elements have resolution very similar to the nearly free case but the degree sequences do not satisfy properties that $d_{1}+d_{2}+d_{3}=8$ and $d_{3}=d_{4}$, which is somehow surprising.
Then we studied the degenerations of these families -- that is, the parameters for which the associated hyperplane arrangements have different intersection lattices. Let us present an example showing the meaning of \textit{degenerated arrangements}.
\begin{example}
We consider here family one-parameter family \textnumero 4 (see \cite[p. 23]{Cynks}), which is given by the following defining polynomial
\begin{equation}
Q_{A,B}(x,y,z,w)=xyzw(x+y)(y+z)(Ax+By+Bz-Aw)(Ax+Ay+Bz-Aw),  
\end{equation}
where $(A:B) \in \mathbb{P}^{1}$ subject to the conditions that $(A,B) \not\in\{(1:0),(0:1),(1,1)\}$.\\
For every admissible parameter $(A:B)$ the intersection poset of the arrangement has the following form:
\begin{center}
\begin{tabular}{ c c c c c  }
\multicolumn{5}{c}{$p$-fold lines} \\\hline \hline
 (1,2,5) & (1,3) &  (1,4) & (1,6) & (1,7) \\
 (1,8) & (2,3,6) & (2,4) & (2,7,8) & (3,4) \\
 (3,5) & (3,7) & (3,8) & (4,5) & (4,6) \\
 (4,7) & (4,8) & (5,6) & (5,7) & (5,8) \\
 & (6,7) &   &(6,8) &

\end{tabular}
\end{center}
and 
\begin{center}
\begin{tabular}{ c c c c c }
\multicolumn{5}{c}{arrangement $q$-fold points} \\ \hline \hline 
 (1,2,5) & (1,3) &  (1,4,7) &  (1,6) &  (1,8) \\
 (2,3,6) & (2,4) & (2,7,8) & (3,4) & (3,5) \\
 (3,7) & (3,8) & (4,5,8) & (4,6) &  (5,6) \\
 & (5,7) &   (6,7) &  (6,8)
\end{tabular}.
\end{center}
Let us now consider the arrangement $\mathcal{D}$ given by $Q_{1,0} = 0$. The intersection lattice of $\mathcal{D}$ has the following form:
\begin{center}
\begin{tabular}{ c c c c c  }
\multicolumn{5}{c}{$p$-fold lines} \\\hline \hline
 (1,2,5) & (1,3) & (1,4,7) & (1,6) & (1,8) \\
 (2,3,6) & (2,4) & (2,7,8) & (3,4) & (3,5) \\
 (3,7) & (3,8) & (4,5,8) & (4,6) & (5,6) \\
 & (5,7) & (6,7) & (6,8) &

\end{tabular}
\end{center}
and 
\begin{center}
\begin{tabular}{ c c c c c c }
\multicolumn{6}{c}{arrangement $q$-fold points} \\ \hline \hline 
 (1,2,3,5,6) & (1,2,4,5,7,8) & (1,3,4,7) & (1,3,8) & (1,4,6,7) & (1,6,8) \\
 (2,3,4,6) & (2,3,6,7,8) & (3,4,5,8) & (3,5,7) & (4,5,6,8) & (5,6,7)
\end{tabular}.
\end{center}
Obviously $\mathcal{D}$ is not a Cynk--Szemberg octic arrangement since $\mathcal{D}$ has a unique sixtuple intersection point, and this is the reason why we call this arrangement a degeneration. However, $$\pi(\mathcal{D},t) = 1+8t+23t^{2}+28t^{3}+12t^{4} = (1+t)(1+2t)^{2}(1+3t),$$
so $\mathcal{D}$ might be a free arrangement. Using \texttt{SINGULAR}, we can directly verify that $\mathcal{D}$ is indeed free with ${\rm exp}_{0}(\mathcal{D}) = (2,2,3)$.
\end{example}
\begin{remark}
We discovered more examples of free octic hyperplane arrangements using the idea of taking non-admissible parameters for these $63$ one-parameter families. However, the examples that we have found are rigid, so they cannot be used in the context of the Terao freeness conjecture.
\end{remark}
\section*{Acknowledgments}
We would like to thank Alex Dimca for useful remarks and to S\l awomir Cynk for explaining us certain aspects of \cite{Cynks}.

Marek Janasz and Piotr Pokora are supported by the National Science Centre (Poland) Sonata Bis Grant  \textbf{2023/50/E/ST1/00025}. For the purpose of Open Access, the author has applied a CC-BY public copyright license to any Author Accepted Manuscript (AAM) version arising from this submission.

\vskip 0.5 cm
\bigskip

Marek Janasz, 
Department of Mathematics,
University of the National Education Commission Krakow,
Podchor\c a\.zych 2,
PL-30-084 Krak\'ow, Poland. \\
\nopagebreak
\textit{E-mail address:} \texttt{marek.janasz@uken.krakow.pl}
\bigskip

Piotr Pokora,
Department of Mathematics,
University of the National Education Commission Krakow,
Podchor\c a\.zych 2,
PL-30-084 Krak\'ow, Poland. \\
\nopagebreak
\textit{E-mail address:} \texttt{piotr.pokora@uken.krakow.pl}
\end{document}